\documentclass[reqno,12pt,a4paper]{amsart}

\usepackage{mathrsfs}
\usepackage{amssymb}
\usepackage{amsfonts}
\usepackage{latexsym}
\usepackage{amsthm}
\usepackage{graphicx}
\usepackage{xcolor}

\theoremstyle{plain}
\newtheorem{theorem}{Theorem}[section]

\newtheorem{proposition}[theorem]{Proposition}

\theoremstyle{remark}
\newtheorem{remark}[theorem]{\bf Remark}
\newtheorem{example}[theorem]{\bf Example}

\newcommand{\lb}{\label}
\newcommand{\er}{\eqref}

\newcommand{\qqq}{\qquad}
\newcommand{\qq}{\quad}
\newcommand{\no}{\noindent}

\newcommand{\s}{\sigma}
\newcommand{\n}{\nu}
\newcommand{\vk}{\varkappa}
\newcommand{\ve}{\varepsilon}
\newcommand{\vp}{\varphi}
\newcommand{\g}{\gamma}

\newcommand{\D}{\Delta}

\newcommand{\cA}{{\mathcal A}}

\newcommand{\cC}{{\mathcal C}}
\newcommand{\cE}{{\mathcal E}}
\newcommand{\cG}{{\mathcal G}}

\newcommand{\cN}{{\mathcal N}}

\newcommand{\cS}{{\mathcal S}}

\newcommand{\cV}{{\mathcal V}}

\newcommand{\C}{{\mathbb C}}
\newcommand{\N}{{\mathbb N}}
\newcommand{\R}{{\mathbb R}}
\newcommand{\T}{{\mathbb T}}
\newcommand{\Z}{{\mathbb Z}}

\newcommand{\be}{{\textbf e}}
\newcommand{\bc}{{\textbf c}}

\newcommand{\gp}{\mathfrak{p}}

\newcommand{\gS}{\mathfrak{S}}

\newcommand{\mn}{\mathrm n}

\newcommand{\bu}{\bullet}
\newcommand{\ts}{\times}
\newcommand{\sm}{\setminus}

\newcommand{\ol}{\overline}

\newcommand{\lan}{\langle}
\newcommand{\ran}{\rangle}

\newcommand{\iy}{\infty}
\newcommand{\pa}{\partial}

\def\diag{\mathop{\mathrm{diag}}\nolimits}
\def\Im{\mathop{\mathrm{Im}}\nolimits}

\renewcommand{\[}{\begin{equation}}
\renewcommand{\]}{\end{equation}}

\def\l{\lambda}

\def\t{\tau}
\def\o{\omega}
\def\ss{\subset}

\begin{document}
\makeatletter
\@namedef{subjclassname@2020}{\textup{2020} Mathematics Subject Classification}
\makeatother

\title[On the measure of spectra for discrete Schr\"odinger operators]{On the measure of spectra for discrete Schr\"odinger operators on periodic graphs}

\date{\today}
\author[Natalia Saburova]{Natalia Saburova}
\address{Northern (Arctic) Federal University, Severnaya Dvina Emb. 17, Arkhangelsk, 163002, Russia, \ n.saburova@gmail.com, \ n.saburova@narfu.ru}

\subjclass[2020]{47A10, 35J10, 05C50}
\keywords{discrete Schr\"odinger operators, periodic graphs, measure of spectra, large-coupling regime}

\begin{abstract}
We consider discrete Schr\"odinger operators $H_{\mu Q}=\D+\mu Q$ with real periodic potentials $Q$ on periodic graphs, where $\D$ is the adjacency operator and $\mu\in\R$ is a coupling constant. The spectra of the operators consist of a finite number of closed intervals (bands). In the large coupling regime, we obtain an asymptotic upper bound for the measure of the spectrum of $H_{\mu Q}$ which depends essentially on a "degeneracy degree" of the potential $Q$. This result extends the result of Y. Last obtained for the one-dimensional lattice $\Z$ to the case of general periodic graphs. It also may serve as a certain quantitative complement to the recent criterion of J.~Fillman for the measure of the spectrum of $H_{\mu Q}$ to go to zero as $\mu\to\iy$.
\end{abstract}

\maketitle

\section{Introduction}
\setcounter{equation}{0}

We consider discrete Schr\"odinger operators $H_Q=\D+Q$ with real periodic potentials $Q$ on $\Z^d$-periodic graphs (for simplicity) embedded into $\R^d$, where $\D$ is the adjacency operator. In solid state physics, the  spectra of these operators determine main physical properties of materials. The spectra $\s(H_Q)$ of the Schr\"odinger operators $H_Q$ consist of a finite number of closed intervals (bands) which may be separated by gaps (forbidden zones). The spectral bands correspond to the set of wave energies at which the waves can propagate through the medium. The problem of estimating the measure $|\s(H_Q)|$ of the spectrum of the Schr\"odinger operators $H_Q$ on periodic graphs was studied in \cite{KS14} -- \cite{KS22}. It is a rather difficult problem to obtain estimates which involve not only geometric parameters of the periodic graph but also the periodic potential itself, and there are only some (quite rude but, as we will see, important) lower bounds depending on the potential in \cite{KS22}. Unfortunately, the upper bounds presented in \cite{KS22} do not depend on the potential at all, i.e., they do note give any information about the influence of the potential on the measure of the spectrum.

This paper was motivated by the following recent result (for detailed definitions see Section~\ref{Sec2}):

\begin{theorem}\lb{FT11}
\cite[Theorem 1.1]{F26} Assume $Q:\cV\to\R$ is a periodic potential on a $\Z^d$-periodic graph $\cG=(\cV,\cE)$. Then
$$
\lim_{\mu\to\iy}\big|\s(H_{\mu Q})\big|=0
$$
if and only if $\cG$ does not contain a path with infinitely many distinct vertices such that the potential is constant along this path.
\end{theorem}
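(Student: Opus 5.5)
We prove the two directions separately, working through the Floquet--Bloch decomposition. We have $H_{\mu Q}\simeq\int^\oplus_{\T^d}H_{\mu Q}(k)\,dk$, where each fiber $H_{\mu Q}(k)=\D(k)+\mu\,\diag(q_1,\dots,q_\nu)$ is a $\nu\times\nu$ Hermitian matrix. Here $\nu$ is the number of vertices in a fundamental cell and $q_j$ are the values of $Q$ on them. The spectrum is the union of the bands $\s_n=\l_n(\T^d)$, where $\l_1(k)\le\dots\le\l_\nu(k)$ are the ordered eigenvalues of the fiber. Write $a_1<\dots<a_m$ for the distinct values of $Q$. For each value $a_s$, let $\cG_s$ be the subgraph induced on $\{v:Q(v)=a_s\}$. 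It is again $\Z^d$-periodic, and its connected components are either all finite or some of them are infinite. A path with infinitely many distinct vertices on which $Q$ is constant exists if and only if some $\cG_s$ has an infinite component. This follows by periodicity and König's lemma, since the graph is locally finite.

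\emph{Sufficiency.} Assume every component of every $\cG_s$ is finite. We rescale, $H_{\mu Q}=\mu(Q+\mu^{-1}\D)$, and apply degenerate perturbation theory. The eigenvalues of $\mu Q$ cluster at $\mu a_s$. By the Schur complement, or Kato's reduction to the spectral subspace $P_s=\mathbf 1_{\{Q=a_s\}}$, the part of the spectrum near $\mu a_s$ equals $\mu a_s+\s(P_s\D P_s)+O(1/\mu)$, uniformly in the quasimomentum $k$. The operator $P_s\D P_s$ is the adjacency operator $\D_{\cG_s}$ of $\cG_s$. Because all its components are finite, and by periodicity there are only finitely many component types up to translation, $\D_{\cG_s}$ is a direct sum of finite matrices. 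Its spectrum is therefore a finite set of flat bands, that is, its fiber eigenvalues do not depend on $k$. Each band of $H_{\mu Q}$ then lies in an interval of length $O(1/\mu)$ around a point of $\mu a_s+\s(\D_{\cG_s})$. Summing over the at most $\nu$ bands gives $|\s(H_{\mu Q})|=O(1/\mu)\to0$. The key estimate, and the main technical point of this direction, is a uniform second-order resolvent bound. For $\l$ at distance about $\mu$ from $\mu a_r$ with $r\ne s$, the Feshbach map gives
$$
P_s(H_{\mu Q}-\l)P_s-P_s\D(1-P_s)\bigl((1-P_s)(H_{\mu Q}-\l)(1-P_s)\bigr)^{-1}(1-P_s)\D P_s ,
$$
and the inverse has norm $O(1/\mu)$ because the gaps $\mu|a_s-a_r|$ are large. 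This bound is uniform in $k$, since $\|\D(k)\|$ is bounded.

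\emph{Necessity.} Assume some $\cG_s$ has an infinite component. We show that $|\s(H_{\mu Q})|$ stays bounded below. The same reduction shows that, up to $O(1/\mu)$, the spectrum near $\mu a_s$ contains $\mu a_s+\s(\D_{\cG_s})$. An infinite periodic component contains a cycle in the quotient graph with nonzero index; equivalently, the periodic component has rank at least one, so its fiber adjacency matrix genuinely depends on $k$. We must then show that $\D_{\cG_s}$ has a band of positive length. To get this, we use the lower bound on the total band length of \cite{KS22}, applied to the periodic graph given by an infinite component of $\cG_s$ with zero potential. That bound should give a positive lower bound whenever the graph is connected and infinite, the obstruction being that all bands are flat. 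An alternative is to restrict $\l_n(k)$ to a one-parameter family $k=t\,\be$ along a direction in which the component is periodic and use analyticity, for instance through the trace $\sum_n\l_n(k)^{\ell}$, which contains a nonconstant term $e^{i\lan k,\o\ran}$ coming from closed walks. The eigenvalue perturbation is uniform of order $O(1/\mu)$, so a band of length $c>0$ for $\D_{\cG_s}$ yields a band of length at least $c-O(1/\mu)$ for $H_{\mu Q}$, and hence $\liminf_{\mu\to\iy}|\s(H_{\mu Q})|\ge c>0$.

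The main obstacle is the claim in the necessity direction that an infinite connected periodic graph always has a nondegenerate band. The difficulty is that flat bands can occur even on infinite graphs; what has to be shown is that not all bands are flat. To establish this, the first thing to try is the trace argument: if every band were flat, then $\mathrm{tr}\,\D(k)^\ell$ would be independent of $k$ for every $\ell$. The nontrivial Fourier coefficient of $\mathrm{tr}\,\D(k)^\ell$ at $e^{i\lan k,\o\ran}$ counts closed walks in the fundamental graph with index $\o$, and these counts are nonnegative. Choosing $\ell$ equal to the length of a shortest cycle with nonzero index makes this coefficient strictly positive, which contradicts flatness.
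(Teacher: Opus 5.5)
Your proof is correct. The paper does not prove this statement itself. It is quoted from \cite{F26}, and Remark~\ref{reMT}(\emph{vi}) only records that Fillman's proof reads the first-order terms of the perturbation series as Floquet eigenvalues of $\D$ on a subgraph. That is the mechanism you use: you reduce the cluster at $\mu a_s$ to $\mu a_s+\s(P_s\D(k)P_s)+O(1/\mu)$, where $P_s\D(k)P_s$ is the Floquet matrix of the level-set subgraph $\cG_s$.

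What the paper proves on its own (Theorem~\ref{MTh} with Remark~\ref{reMT}(\emph{iv})) is only the ``if'' half, and by a different route. Instead of perturbing eigenvalues, it shows that the Floquet eigenvectors localize, $\vp_s(k,v)=O(\mu^{-d(v,\cV_*^a)})$. It then bounds each band width via Proposition~\ref{Pubb} by a sum over edges of nonzero index only. Finally it picks, using a minimum spanning tree for the weights $\o_a$ (Proposition~\ref{Ppge}), an index form whose nonzero values sit only on edges with $\o_a(\be)\ge\o(a)$. This gives the rate $\mu^{-\o}$, and $\o\ge1$ exactly when no homotopically nontrivial cycle carries constant potential. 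Your first-order Feshbach argument yields only $O(1/\mu)$. On the other hand, the paper's method is a pure upper bound and gives only $O(1)$ in the degenerate case, whereas your reduction also proves the converse.

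Your trace argument does close the obstacle you flagged. The function $\mathrm{tr}\,(P_s\D(k)P_s)^\ell$ is a trigonometric polynomial whose coefficient at $e^{i\lan n,k\ran}$ counts closed walks of length $\ell$ and index $n$ in $\cG_s/\Z^d$. So a single closed walk of nonzero index forces a non-flat branch; the projection of a path in the infinite component from $v$ to $v+n$ is one such walk. Minimality of $\ell$ plays no role. This is cleaner than appealing to \cite{KS22}, which would require treating the component as periodic under its stabiliser $\Gamma\cong\Z^r$.

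One step is worth writing out. The claim that a band of length $c$ for $\D_{\cG_s}$ gives a band of length $c-O(1/\mu)$ uses an ordered, $k$-uniform matching $\l_{j_o+i-1}(k)=\mu a_s+\eta_i(k)+O(1/\mu)$. Here $\eta_1(k)\le\eta_2(k)\le\cdots$ are the eigenvalues of $P_s\D(k)P_s$ and $j_o$ is the first index of the cluster. This matching follows from your Feshbach map. Its effective operator $P_s\D P_s-P_s\D(1-P_s)\bigl((1-P_s)(H_{\mu Q}-\l)(1-P_s)\bigr)^{-1}(1-P_s)\D P_s$ is operator-decreasing in $\l$. Hence each of its eigenvalue branches (shifted by $\mu a_s$) meets the diagonal exactly once, within $O(1/\mu)$ of $\mu a_s+\eta_i(k)$.
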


The latter condition in Theorem \ref{FT11} is equivalent to the absence of a homotopically nontrivial cycle in the \emph{quotient graph} $\cG_*=\cG/\Z^d$ (which is a graph on the torus $\R^d/\Z^d$) with the same potential values at its vertices. The author wrote \cite[p.2]{F26} that "adapting the techniques of \cite{AFFL25} one can show that for \emph{non-degenerate} $Q$ (i.e., for $Q$ injective on $\cG_*$), the decay in Theorem~\ref{FT11} happens at least at the rate $\mu^{1-\g}$, where $\g$ is the shortest \emph{length} (the number of edges) of a homotopically nontrivial cycle in $\cG_*$".

In this paper, in the large coupling regime, we obtain an asymptotic upper bound for the measure of the spectrum of $H_{\mu Q}$ when the potential $Q$ has a \emph{partial degeneracy} along cycles of the quotient graph $\cG_*$. We note that for the Schr\"odinger operator $H_{\mu Q}$ on the one-dimensional lattice $\Z$, such decay bound was obtained by Y.~Last:
\begin{theorem}
\cite[Theorem 3]{L92} Let $Q:\Z\to\R$ be a periodic potential, and let $\mu$ be a positive coupling constant, then for the potential $\mu Q$, in the limit $\mu\to\iy$:
\[\lb{Laas}
\big|\s(H_{\mu Q})\big|=O(\mu^{1-\g}),
\]
where $\g$ is defined by:
\[\lb{Lada}
\begin{aligned}
&\g=\min_{a\in Q(\Z)}d(a),\\
&d(a)=\max\{j-k\mid j>k, \; Q(j)=Q(k)=a, \; Q(l)\neq a,\; \forall\,j>l>k\},
\end{aligned}
\]
i.e., $\g$ is the minimum of the distances $d(a)$, where $d(a)$ is the maximal distance between two potential points with the same value $a$.
\end{theorem}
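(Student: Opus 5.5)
Take $Q$ to be $p$-periodic and use Floquet theory. The spectrum of $H_{\mu Q}$ is the union of the $p$ bands
\[
\s_n=\{\l_n(\vartheta):\vartheta\in[-\pi,\pi]\},
\]
where $\l_1(\vartheta)\le\dots\le\l_p(\vartheta)$ are the eigenvalues of the $p\times p$ Floquet matrix
\[
J(\vartheta)=J_0+\mu\,\diag(Q(1),\dots,Q(p)).
\]
Here $J_0$ has ones on the cyclic off-diagonals, and the corner entries $J_{1p},J_{p1}$ carry the factors $e^{\pm i\vartheta}$. It suffices to bound $|\s_n|\le \max_\vartheta\l_n-\min_\vartheta\l_n$ for each $n$, so the plan is to estimate how much each eigenvalue can move when $\vartheta$ varies.

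\textbf{Step 1 (localization).} Group the sites by potential value. For each value $a\in Q(\Z)$, let
\[
S_a=\{k\in\{1,\dots,p\}:Q(k)=a\}.
\]
For $\mu$ large, the eigenvalues of $J(\vartheta)$ split into clusters lying within $O(1)$ of $\mu a$, with $|S_a|$ eigenvalues in the cluster near $\mu a$. Distinct clusters are separated by $\gtrsim\mu\,\delta$, where $\delta$ is the minimal gap between distinct values of $Q$.

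\textbf{Step 2 (effective block).} Apply a Schur complement (Feshbach reduction) to each cluster. An eigenvalue $\l=\mu a+O(1)$ is a root of
\[
\det\Big(\l-\mu a-P_aJ_0P_a-P_aJ_0P_a^\perp\big(\l-P_a^\perp J(\vartheta)P_a^\perp\big)^{-1}P_a^\perp J_0P_a\Big)=0 .
\]
Expand the resolvent in a Neumann series in $J_0/\mu$. Any term of this expansion that depends on $\vartheta$ comes from a walk that leaves $S_a$, crosses the corner edge, and returns. Such a walk is a path $k\to l$ between consecutive (cyclically) points of $S_a$, and its interior sites all have potential $\neq a$. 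A walk of length $m$ with $m-1$ interior sites contributes $O(\mu^{-(m-1)})$.

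The $\vartheta$-dependent walks must pass through the corner edge, which is only possible if they wrap around the period. Their length is therefore the cyclic gap between the consecutive points of $S_a$ that straddle the boundary. Here I use the freedom to choose the fundamental cell: translating the cell so that the boundary falls inside the largest gap, of length $d(a)$, does not change the spectrum. Once this is done, every $\vartheta$-dependent term is $O(\mu^{1-d(a)})$. Strictly, the estimate must hold for all clusters $a$ simultaneously with a single cell. This is handled instead by moving the phase factor $e^{i\vartheta}$ by a gauge transformation, cluster by cluster. The block determinant is gauge invariant, so within each cluster's effective matrix the phase can be placed on the largest gap.

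\textbf{Step 3 (conclusion).} The effective matrix for cluster $a$ is therefore
\[
M_a(\l)+O\big(\mu^{1-d(a)}\big)E(\vartheta),
\]
with $M_a$ Hermitian and independent of $\vartheta$. By Weyl's inequality, applied through a fixed point or monotonicity argument in $\l$ (the $\l$-dependence has derivative $O(\mu^{-2})$), each eigenvalue in the cluster varies by $O(\mu^{1-d(a)})$ as $\vartheta$ varies. Hence
\[
|\s(H_{\mu Q})|\le\sum_a|S_a|\,C\mu^{1-d(a)}=O(\mu^{1-\g}).
\]

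\textbf{Main obstacle.} The hardest part is Step 2, namely keeping track of the self-consistent $\l$-dependence of the Schur complement while bounding the $\vartheta$-dependence uniformly. For example, Step 2 requires showing that no $\vartheta$-dependent term of order lower than $\mu^{1-d(a)}$ survives after the gauge choice. I would first try the alternative route via the discriminant: for $\Z$ the characteristic equation is
\[
\det(J(\vartheta)-\l)=\Delta(\l)-2\cos\vartheta,
\]
and one could estimate $\Delta'$ near $\mu a$ directly, as in Last's argument. The Feshbach route generalizes better, however.
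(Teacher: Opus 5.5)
Your proposal is correct, and it takes a different route from the paper. The paper does not prove Last's theorem directly. It recovers it from Theorem~\ref{MTh} via Remark~\ref{reMT}(\emph{iii}), where $\o(a)=d(a)-1$ on $\Z$, and that proof works through eigenvectors rather than through a reduced eigenvalue problem. It has three ingredients:
\begin{itemize}
\item Proposition~\ref{Pubb} (mean value theorem plus the Hellmann--Feynman formula along a segment in $\T^d$) bounds $|\s_n|$ by $2\pi d\sum_{\t(\be)\neq0}\|\t(\be)\|\,|\vp_n(k_o,u)|\,|\vp_n(k_o,v)|$ at a single quasimomentum.
\item The Riesz-projection expansion gives $\vp_s(k,v)=O(\mu^{-d(v,\cV_*^a)})$ on the cluster at $\mu a$.
\item The minimum-spanning-tree gauge $\t_a$ of Proposition~\ref{Ppge}, chosen separately for each $a$, puts nonzero indices only on edges with $\o_a(\be)\ge\o(a)$.
\end{itemize}

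On $\Z$ this means the paper needs the phase on a \emph{middle} edge of the largest gap, where both endpoints are far from $S_a$. Your walk count does not care where in the gap the phase sits. Any walk whose net crossing of the phase edge is nonzero must traverse the whole gap, so it carries at least $d(a)-1$ factors $(\l-\mu Q(l))^{-1}$. It is net crossing, not merely crossing, that produces $\vartheta$-dependence; walks that cross and come back are $\vartheta$-independent, so you should phrase Step~2 that way. You then pass the uniform $O(\mu^{1-d(a)})$ bound on the $\vartheta$-dependent part of the reduced matrix to the eigenvalues by Weyl's inequality. This works because the reduced matrix has $\l$-derivative $\ge I$, so each of its eigenvalues crosses zero with slope at least one.

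Your concern about needing a single cell for all clusters is unnecessary. A gauge change is a unitary conjugation of $J(\vartheta)$ for every $\vartheta$ and leaves all band functions unchanged, which is exactly why the paper may let $\t_a$ depend on $a$.

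On trade-offs: the paper needs only eigenvector localization plus a first-order variational formula. However, inside a degenerate cluster it must control individual eigenvectors, and its step with the coefficients $b_j,b_j'$ is its least explicit point. Your reduction treats degenerate clusters uniformly without touching eigenvectors, and it exhibits the leading $\vartheta$-dependent term explicitly, which is a natural starting point for lower bounds. Your route also extends to general periodic graphs. A walk with nonzero net index must cross an edge $\be=(x,y)$ with $\t_a(\be)\neq0$, hence has at least $d(x,\cV_*^a)+d(y,\cV_*^a)=\o_a(\be)\ge\o(a)$ interior vertices. This recovers the exponent $\o(a)$ of Theorem~\ref{MTh}.
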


Let us formulate our main result. We consider the Schr\"odinger operator $H_{\mu Q}$ with a real $\Z^d$-periodic potential $Q$ on a $\Z^d$-periodic graph $\cG\ss\R^d$ with the quotient graph $\cG_*=(\cV_*,\cE_*)=\cG/\Z^d\ss\R^d/\Z^d$. Let $a\in Q(\cV_*)$, i.e., $a=Q(v)$ for some $v\in\cV_*$, and let
\[\lb{cVa}
\cV_*^a=\{v\in\cV_* : Q(v)=a\}\subseteq\cV_*.
\]
Denote by $d(u,\cV_*^a)$ the distance from a vertex $u\in\cV_*$ to the vertex set $\cV_*^a$:
\[\lb{dvse}
d(u,\cV_*^a)=\min\limits_{v\in \cV_*^a}d(u,v),
\]
where $d(u,v)$ is the \emph{distance} between vertices $u$ and $v$, i.e., the minimal \emph{length} (the number of edges) of a path from $u$ to $v$, in the quotient graph $\cG_*$. If $u\in\cV_*^a$, then $d(u,\cV_*^a)=0$. For each edge $\be$ of $\cG_*$, we define the weight $\o_a(\be)$ by
\[\lb{wae}
\o_a(\be)=d(u,\cV_*^a)+d(v,\cV_*^a)\geq0,\qqq \be=\{u,v\}\in\cE_*.
\]
Note that $\big|d(u,\cV_*^a)-d(v,\cV_*^a)\big|\in\{0,1\}$ for each $\be=\{u,v\}\in\cE_*$.

\begin{theorem}\lb{MTh}
Let $\mu$ be a positive coupling constant. Then, in the limit $\mu\to\iy$, the measure of the spectrum of the Schr\"odinger operator $H_{\mu Q}$ on a periodic graph $\cG$ satisfies
\[\lb{mare}
\big|\s(H_{\mu Q})\big|=O(\mu^{-\o}),
\]
where
\[\lb{ooa}
\o=\min\limits_{a\in Q(\cV_*)}\o(a),\qqq \o(a)=\min\limits_{\bc\in\cC^+}\max\limits_{\be\in\bc}\o_a(\be).
\]
Here $\cC^+$ is the set of all homotopically nontrivial cycles in the quotient graph $\cG_*=(\cV_*,\cE_*)$, the maximum is taken over all edges $\be$ of the cycle $\bc$, and $\o_a(\be)$ is the weight of the edge $\be\in\cE_*$ defined by \er{wae}.
\end{theorem}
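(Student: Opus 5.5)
Throughout we use the Floquet--Bloch decomposition. We have $H_{\mu Q}=\int^\oplus_{\T^d}H_{\mu Q}(k)\,dk$, where $H_{\mu Q}(k)=\D(k)+\mu Q$ is a $\n\ts\n$ Hermitian matrix acting on $\ell^2(\cV_*)$ with $\n=\#\cV_*$. Its entries are $\D(k)_{uv}=\sum_{\be=(u,v)}e^{i\lan\t(\be),k\ran}$, where $\t(\be)\in\Z^d$ is the index of the edge. The spectrum is the union of the bands $\s_n=\l_n(\T^d)$, where $\l_1(k)\le\dots\le\l_\n(k)$ are the eigenvalues of $H_{\mu Q}(k)$. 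Hence $|\s(H_{\mu Q})|\le\sum_n\big(\max_k\l_n-\min_k\l_n\big)$. It therefore suffices to show that, for $\mu$ large, every eigenvalue $\l_n(k)$ varies in $k$ by at most $C\mu^{-\o}$.

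Fix $\mu$ large. By standard perturbation theory, $\|\D(k)\|\le C$ uniformly in $k$, so the eigenvalues cluster in $\mu$-scaled groups: for each value $a\in Q(\cV_*)$ there are exactly $\#\cV_*^a$ eigenvalues in $[\mu a-C,\mu a+C]$. We treat each cluster $a$ separately. Let $P$ be the coordinate projection onto $\ell^2(\cV_*^a)$ and $P^\perp=I-P$. Apply a Schur complement (Feshbach) reduction: $\l$ near $\mu a$ is an eigenvalue of $H_{\mu Q}(k)$ if and only if it is an eigenvalue of the $\#\cV_*^a\ts\#\cV_*^a$ matrix
$$
F(\l,k)=\mu a+P\D(k)P-P\D(k)P^\perp\big(P^\perp(\D(k)+\mu Q-\l)P^\perp\big)^{-1}P^\perp\D(k)P .
$$
Write $\l=\mu a+\eta$ with $|\eta|\le C$. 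On the complement, $P^\perp(\mu Q-\l)P^\perp$ has diagonal entries of size at least $c\mu$. Expanding the inverse in a Neumann series therefore gives $F$ as a sum over walks in $\cG_*$ that start and end in $\cV_*^a$ and have all interior vertices outside $\cV_*^a$. A walk with $m$ interior steps contributes $O(\mu^{-(m-1)})$ and carries the phase $e^{i\lan\t(\text{walk}),k\ran}$. The self-consistent eigenvalue equation in $\eta$ is then solved by a fixed point / min-max argument, using the monotonicity of $F$ in $\l$.

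The key combinatorial step is to show that all $k$-dependence of the cluster eigenvalues enters at order $\mu^{-\o(a)}$. Consider the gauge transformation $U(k)=\diag(e^{i\lan\vartheta(v),k\ran})$. A dependence of the spectrum of $F$ on $k$ that cannot be gauged away arises only through products of entries around closed walks with nonzero total index, i.e.\ homotopically nontrivial cycles in the graph of effective couplings on $\cV_*^a$ (a Wilson-loop argument). The plan is to define effective edges between vertices of $\cV_*^a$, or effective self-loops, using walks through the complement, each weighted by the power of $\mu^{-1}$ it carries. Below a threshold order $\mu^{-\o(a)+\epsilon}$, consider the subgraph $\cG_*'$ spanned by all edges $\be$ of $\cG_*$ with $\o_a(\be)<\o(a)$. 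By the definition \er{ooa}, this subgraph contains no homotopically nontrivial cycle, so it admits a potential function $\vartheta$ on its components that trivializes the index.

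One then has to show two things. First, conjugating $H_{\mu Q}(k)$ by the corresponding $U(k)$ makes every edge with $\o_a(\be)<\o(a)$ $k$-independent. Second, the remaining edges, those with $\o_a(\be)\ge\o(a)$, affect the cluster eigenvalues only at order $\mu^{-\o(a)}$. The second claim should follow from second-order (and higher) perturbation theory. An edge $\{u,v\}$ with $d(u,\cV_*^a)=p$ and $d(v,\cV_*^a)=q$ enters the cluster's eigenvalues via walks from $\cV_*^a$ to $u$, across the edge, and back from $v$. Such a walk has length at least $p+q+1$ and so contributes $O(\mu^{-(p+q)})=O(\mu^{-\o_a(\be)})$ in the Feshbach expansion. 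This is exactly why the weight \er{wae} appears. Taking the minimum over $a$ gives \er{mare}.

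The main obstacle is making the gauge-plus-perturbation argument rigorous for degenerate clusters with $\#\cV_*^a>1$, and handling the nonlinear dependence of $F$ on $\l$. My first approach would be to avoid eigenvector perturbation altogether. Instead, I would compare $U(k)^*H_{\mu Q}(k)U(k)$ with $U(0)^*H_{\mu Q}(0)U(0)$, whose difference $E(k)$ is supported on edges with $\o_a(\be)\ge\o(a)$. I would then prove the operator inequality $\|\Pi_a E(k)\Pi_a\|\le C\mu^{-\o(a)}$ on the spectral subspace $\Pi_a$ of the cluster. This uses the fact that cluster eigenvectors decay like $\mu^{-d(u,\cV_*^a)}$ at vertex $u$, which follows from the Combes--Thomas-type Neumann estimate above. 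With this inequality, a min-max comparison yields $|\l_n(k)-\l_n(0)|\le C\mu^{-\o(a)}$, and the bound on $|\s(H_{\mu Q})|$ follows.
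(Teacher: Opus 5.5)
Your architecture matches the paper's in two of its three steps. The gauge $U(k)$, built from a potential $\vartheta$ on the subgraph of edges with $\o_a(\be)<\o(a)$, does the same job as the paper's 1-form $\t_a$ vanishing on a minimum spanning tree for $\o_a$ (Proposition~\ref{Ppge}). In both cases only edges with $\o_a(\be)\ge\o(a)$ carry a nonzero index, and your justification via \er{ooa} is correct. The decay $|\phi(u)|\le C\mu^{-d(u,\cV_*^a)}\|\phi\|$ on the cluster subspace is the paper's \er{evas}.

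The gap is the last step. Write $\tilde H(k)=U(k)^*H_{\mu Q}(k)U(k)$ and $E(k)=\tilde H(k)-\tilde H(0)$. The nonzero entries of $E(k)$ are $e^{i\lan\t'(\be),k\ran}-1$, where $\t'$ is the gauged index, and these are of size $O(1)$. So $\|\Pi_aE(k)\Pi_a\|\le C\mu^{-\o(a)}$ controls only the first-order shift. Min--max turns a compression bound into an eigenvalue bound only one-sidedly, and only for an extreme cluster. For an interior cluster the trial spaces must contain the lower clusters. The cross terms $\lan\phi_<,E(k)\phi_a\ran$ then produce a second-order shift of order $\|\Pi_a^\perp E(k)\Pi_a\|^2/\mu$, which $\Pi_aE(k)\Pi_a$ does not see. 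For example, take $H_0=\diag(-\mu,0,\mu)$ and $E$ with only nonzero entries $E_{12}=E_{21}=b$. The compression of $E$ onto the middle eigenvector vanishes, yet the middle eigenvalue moves by about $b^2/\mu$. So $|\l_n(k)-\l_n(0)|\le C\mu^{-\o(a)}$ does not follow from the ingredients you list.

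The simplest repair is the paper's Proposition~\ref{Pubb}: differentiate along $k(t)=k^-+t(k^+-k^-)$ instead of comparing endpoints. By Hellmann--Feynman, $\frac{d}{dt}\l_n(k(t))=\lan\psi_t,\pa_t\tilde H(k(t))\psi_t\ran$, with $\psi_t$ the \emph{exact} eigenvector at $k(t)$. Since $\pa_t\tilde H$ lives on edges with $\t'(\be)\neq0$, your decay estimate at each $t$ gives $|\frac{d}{dt}\l_n|\le C\mu^{-\o(a)}$, with no second-order term at all.

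Alternatively, keep your comparison and also bound the off-diagonal block. Since $|d(u,\cV_*^a)-d(v,\cV_*^a)|\le1$ along edges, both endpoints of an edge with $\o_a(\be)\ge\o(a)$ lie at distance $\ge\lceil(\o(a)-1)/2\rceil$ from $\cV_*^a$. Hence $\|E(k)\Pi_a\|\le C\mu^{-\lceil(\o(a)-1)/2\rceil}$, and the second-order Schur-complement term is $O(\mu^{-1-2\lceil(\o(a)-1)/2\rceil})=O(\mu^{-\o(a)})$.

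Your earlier Feshbach route, with the coordinate projection onto $\ell^2(\cV_*^a)$, also closes. After gauging, $F(\l,k)-F(\l,0)$ contains only walks through an edge of weight $\ge\o(a)$, so it is $O(\mu^{-\o(a)})$ uniformly in $\l$. Since $\pa_\l F\le0$, each root of $\l=f_j(\l,k)$, with $f_j$ the $j$-th eigenvalue of $F$, moves by at most that much. The degeneracy and the nonlinearity in $\l$ that you flagged are therefore not real obstacles.
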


\begin{remark}\lb{reMT}
\emph{i}) Recall that the quotient graph $\cG_*$ is a graph on the $d$-dimensional torus $\R^d/\Z^d$. A \emph{homotopically nontrivial} cycle in $\cG_*$ is a cycle on the torus that cannot be continuously deformed to a single point (for detailed definitions see Section \ref{ssci}). Note that the set of all (simple) cycles in the \emph{finite} quotient graph $\cG_*$ is finite, and $\cC^+\neq\varnothing$ (there are at least $d$ homotopically nontrivial cycles in $\cG_*$; they provide the connectivity of the $\Z^d$-periodic graph~$\cG$, see Fig.\ref{FBri}).

\emph{ii}) For each $a\in Q(\cV_*)$, there are $\#\cV_*^a$ spectral bands $\s_{\mu a}$ of $H_{\mu Q}$ "near" $\mu a$, where $\cV_*^a$ is defined by \er{cVa}, and $\#$ denotes the number of elements in a set. The proof of Theorem \ref{MTh} gives that the width of each of those bands
\[\lb{Aswb}
|\s_{\mu a}|=O\big(\mu^{-\o(a)}\big),
\]
where $\o(a)$ is given in \er{ooa}.

\emph{iii}) For the lattice $\Z$ the quotient graph $\cG_*=(\cV_*,\cE_*)$ is a homotopically nontrivial cycle with $\n$ vertices, where $\n$ is the period of the potential $Q$. Then, for each $a\in Q(\cV_*)$, we have
$$
\begin{aligned}
&\o(a)=\max\limits_{\be\in\cE_*}\o_a(\be)=
\max\limits_{\{u,v\}\in\cE_*}(d(u,\cV_*^a)+d(v,\cV_*^a))=d(a)-1,\\
& \o=\min\limits_{a\in Q(\cV_*)}\o(a)=\min\limits_{a\in Q(\cV_*)}d(a)-1=\g-1,
\end{aligned}
$$
where $d(a)$ and $\g$ are given in \er{Lada}. Thus, the asymptotics \er{mare} for the one-dimensional lattice $\Z$ coincides with Last's result \er{Laas} and extends this result to the case of general periodic graphs.

\emph{iv}) If there is a cycle $\bc\in\cC^+$ with a constant potential $a$, then the weight $\o_a(\be)=0$ for each edge $\be$ of this cycle $\bc$. This yields that $\o=\o(a)=0$ and $\big|\s(H_{\mu Q})\big|=O(1)$.

If there are no cycles in $\cC^+$ with a constant potential, then for any $\bc\in\cC^+$ and any $a\in Q(\cV_*)$, there exists an edge $\be\in\bc$ such that $\o_a(\be)\geq1$. Therefore, $\o\geq1$ and $\big|\s(H_{\mu Q})\big|\to 0$ as $\mu\to\iy$. In particular, if the potential $Q$ is non-degenerate, then $\o=\g-1$, where $\g$ is the shortest length of a cycle in $\cC^+$.

Thus, the asymptotics \er{mare} agrees with Theorem \ref{FT11} \cite{F26} and can be considered as a certain quantitative complement to that result.

\emph{v}) The asymptotic upper bound \er{mare} means that $\big|\s(H_{\mu Q})\big|$ decays at the rate $\mu^{-\o}$ or faster. Using the same techniques as in \cite{KS22}, we can obtain the following lower bound for the total bandwidth $\gS(H_{\mu Q})$ of $H_{\mu Q}$ (the sum of widths of all its bands, which may be greater than the measure of $\s(H_{\mu Q})$, since the bands may overlap):
\[\lb{tobw}
\gS(H_{\mu Q})\geq\frac{\cN_\g^+\,\mu^{1-\g}}{\g\big(q+\frac{\vk}\mu\big)^{\g-1}}\,,\qqq \textrm{where}\qqq q=\max_{v\in\cV_*}Q(v)-\min_{v\in\cV_*}Q(v),
\]
$\vk$ is the maximum vertex degree, $\g$ is the shortest length of a cycle $\bc\in\cC^+$, and $\cN_\g^+$ is the number of all cycles of length $\g$ from $\cC^+$, see \cite[Corollary 4]{KS22}. Thus, at least for non-degenerate potentials, the asymptotic upper bound \er{mare} is sharp. For degenerate potentials the lower estimate \er{tobw} guaranties that the rate of decay for $\big|\s(H_{\mu Q})\big|$ is not faster than $\mu^{1-\g}$. It is an open problem to obtain a sharp asymptotic bound for degenerate potentials (or to show that the bound \er{mare} is sharp in any case).

\emph{vi}) The proof of Theorem \ref{FT11} \cite{F26} is based on the interpretation of the first order terms in the perturbation series for the Floquet eigenvalues of $H_{\mu Q}=\D+\mu Q$ as Floquet eigenvalues of $\D$ on some subgraph of the original periodic graph $\cG$. The proof of Theorem \ref{MTh} does not use this idea and is close to the proof from \cite{L92} (adapted to the case of general periodic graphs). Our proof consists of the following steps:

$\bu$ we estimate the width of each spectral band in terms of the Floquet eigenvectors of $H_{\mu Q}$ (Proposition \ref{Pubb});

$\bu$ in the large coupling regime, we construct the perturbation series for these Floquet eigenvectors of $H_{\mu Q}$, using spectral projections;

$\bu$ finally, we choose appropriate embeddings of the periodic graph into $\R^d$ (Proposition \ref{Ppge}) which allow us to construct the asymptotic upper bound \er{mare}.
\end{remark}

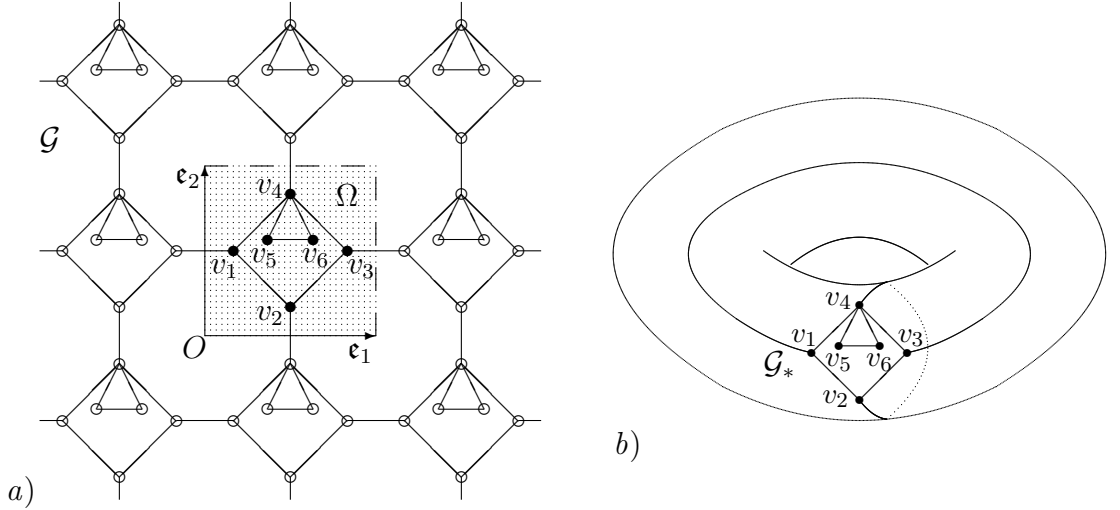
\begin{figure}[t]
\centering
\unitlength 1.5mm 
\linethickness{0.4pt}

\begin{picture}(59,50)(0,0)
\put(5,8.0){\emph{a})}
\put(8,39.0){$\cG$}
\put(22.5,22.5){\vector(1,0){15.00}}
\put(22.5,22.5){\vector(0,1){15.00}}
\multiput(22.5,37.5)(4,0){4}{\line(1,0){2}}
\multiput(37.5,22.5)(0,4){4}{\line(0,1){2}}

\bezier{30}(22.5,22.5)(22.5,30)(22.5,37.5)
\bezier{30}(23.0,22.5)(23.0,30)(23.0,37.5)
\bezier{30}(23.5,22.5)(23.5,30)(23.5,37.5)
\bezier{30}(24.0,22.5)(24.0,30)(24.0,37.5)
\bezier{30}(24.5,22.5)(24.5,30)(24.5,37.5)
\bezier{30}(25.0,22.5)(25.0,30)(25.0,37.5)
\bezier{30}(25.5,22.5)(25.5,30)(25.5,37.5)
\bezier{30}(26.0,22.5)(26.0,30)(26.0,37.5)
\bezier{30}(26.5,22.5)(26.5,30)(26.5,37.5)
\bezier{30}(27.0,22.5)(27.0,30)(27.0,37.5)
\bezier{30}(27.5,22.5)(27.5,30)(27.5,37.5)
\bezier{30}(28.0,22.5)(28.0,30)(28.0,37.5)
\bezier{30}(28.5,22.5)(28.5,30)(28.5,37.5)
\bezier{30}(29.0,22.5)(29.0,30)(29.0,37.5)
\bezier{30}(29.5,22.5)(29.5,30)(29.5,37.5)
\bezier{30}(30.0,22.5)(30.0,30)(30.0,37.5)
\bezier{30}(30.5,22.5)(30.5,30)(30.5,37.5)
\bezier{30}(31.0,22.5)(31.0,30)(31.0,37.5)
\bezier{30}(31.5,22.5)(31.5,30)(31.5,37.5)
\bezier{30}(32.0,22.5)(32.0,30)(32.0,37.5)
\bezier{30}(32.5,22.5)(32.5,30)(32.5,37.5)
\bezier{30}(33.0,22.5)(33.0,30)(33.0,37.5)
\bezier{30}(33.5,22.5)(33.5,30)(33.5,37.5)
\bezier{30}(34.0,22.5)(34.0,30)(34.0,37.5)
\bezier{30}(34.5,22.5)(34.5,30)(34.5,37.5)
\bezier{30}(35.0,22.5)(35.0,30)(35.0,37.5)
\bezier{30}(35.5,22.5)(35.5,30)(35.5,37.5)
\bezier{30}(36.0,22.5)(36.0,30)(36.0,37.5)
\bezier{30}(36.5,22.5)(36.5,30)(36.5,37.5)
\bezier{30}(37.0,22.5)(37.0,30)(37.0,37.5)
\bezier{30}(37.5,22.5)(37.5,30)(37.5,37.5)
\put(35,20.5){$\mathfrak{e}_1$}
\put(19.9,36){$\mathfrak{e}_2$}
\put(20.5,20,5){$O$}
\put(34,34){$\Omega$}
\put(10,15){\line(1,-1){5.00}}
\put(10,15){\line(1,1){5.00}}
\put(20,15){\line(-1,-1){5.00}}
\put(20,15){\line(-1,1){5.00}}
\put(20,15){\line(1,0){5.00}}
\put(15,20){\line(0,1){5.00}}
\put(10,15){\line(-1,0){2.00}}
\put(10,30){\line(-1,0){2.00}}
\put(10,45){\line(-1,0){2.00}}
\put(50,15){\line(1,0){2.00}}
\put(50,30){\line(1,0){2.00}}
\put(50,45){\line(1,0){2.00}}
\put(15,10){\line(0,-1){2.00}}
\put(30,10){\line(0,-1){2.00}}
\put(45,10){\line(0,-1){2.00}}
\put(15,50){\line(0,1){2.00}}
\put(30,50){\line(0,1){2.00}}
\put(45,50){\line(0,1){2.00}}

\put(10,15){\circle{1.0}}
\put(15,10){\circle{1.0}}
\put(20,15){\circle{1.0}}
\put(15,20){\circle{1.0}}
\put(17,16){\circle{1}}
\put(13,16){\circle{1}}
\put(15,20){\line(-1,-2){2.00}}
\put(15,20){\line(1,-2){2.00}}
\put(13,16){\line(1,0){4.00}}
\put(25,15){\line(1,-1){5.00}}
\put(25,15){\line(1,1){5.00}}
\put(35,15){\line(-1,-1){5.00}}
\put(35,15){\line(-1,1){5.00}}
\put(35,15){\line(1,0){5.00}}
\put(30,20){\line(0,1){5.00}}
\put(25,15){\circle{1}}
\put(30,10){\circle{1}}
\put(35,15){\circle{1}}
\put(30,20){\circle{1}}
\put(32,16){\circle{1}}
\put(28,16){\circle{1}}
\put(30,20){\line(-1,-2){2.00}}
\put(30,20){\line(1,-2){2.00}}
\put(28,16){\line(1,0){4.00}}
\put(40,15){\line(1,-1){5.00}}
\put(40,15){\line(1,1){5.00}}
\put(50,15){\line(-1,-1){5.00}}
\put(50,15){\line(-1,1){5.00}}
\put(45,20){\line(0,1){5.00}}
\put(40,15){\circle{1}}
\put(45,10){\circle{1}}
\put(50,15){\circle{1}}
\put(45,20){\circle{1}}
\put(47,16){\circle{1}}
\put(43,16){\circle{1}}
\put(45,20){\line(-1,-2){2.00}}
\put(45,20){\line(1,-2){2.00}}
\put(43,16){\line(1,0){4.00}}

\put(10,30){\line(1,-1){5.00}}
\put(10,30){\line(1,1){5.00}}
\put(20,30){\line(-1,-1){5.00}}
\put(20,30){\line(-1,1){5.00}}
\put(20,30){\line(1,0){5.00}}
\put(15,35){\line(0,1){5.00}}
\put(10,30){\circle{1}}
\put(15,25){\circle{1}}
\put(20,30){\circle{1}}
\put(15,35){\circle{1}}
\put(17,31){\circle{1}}
\put(13,31){\circle{1}}
\put(15,35){\line(-1,-2){2.00}}
\put(15,35){\line(1,-2){2.00}}
\put(13,31){\line(1,0){4.00}}
\put(25,30){\line(1,-1){5.00}}
\put(25,30){\line(1,1){5.00}}
\put(35,30){\line(-1,-1){5.00}}
\put(35,30){\line(-1,1){5.00}}
\put(35,30){\line(1,0){5.00}}
\put(30,35){\line(0,1){5.00}}
\put(23,28){$v_1$}
\put(35,28){$v_3$}
\put(27,23.9){$v_2$}
\put(27,35.1){$v_4$}
\put(26.5,29){$v_5$}
\put(31,29){$v_6$}
\put(32,31){\circle*{1}}
\put(28,31){\circle*{1}}
\put(30,35){\line(-1,-2){2.00}}
\put(30,35){\line(1,-2){2.00}}
\put(28,31){\line(1,0){4.00}}
\put(25,30){\circle*{1}}
\put(30,25){\circle*{1}}
\put(35,30){\circle*{1}}
\put(30,35){\circle*{1}}
\put(40,30){\line(1,-1){5.00}}
\put(40,30){\line(1,1){5.00}}
\put(50,30){\line(-1,-1){5.00}}
\put(50,30){\line(-1,1){5.00}}
\put(45,35){\line(0,1){5.00}}
\put(40,30){\circle{1}}
\put(45,25){\circle{1}}
\put(50,30){\circle{1}}
\put(45,35){\circle{1}}
\put(47,31){\circle{1}}
\put(43,31){\circle{1}}
\put(45,35){\line(-1,-2){2.00}}
\put(45,35){\line(1,-2){2.00}}
\put(43,31){\line(1,0){4.00}}
\put(10,45){\line(1,-1){5.00}}
\put(10,45){\line(1,1){5.00}}
\put(20,45){\line(-1,-1){5.00}}
\put(20,45){\line(-1,1){5.00}}
\put(20,45){\line(1,0){5.00}}
\put(10,45){\circle{1}}
\put(15,40){\circle{1}}
\put(20,45){\circle{1}}
\put(15,50){\circle{1}}
\put(17,46){\circle{1}}
\put(13,46){\circle{1}}
\put(15,50){\line(-1,-2){2.00}}
\put(15,50){\line(1,-2){2.00}}
\put(13,46){\line(1,0){4.00}}
\put(25,45){\line(1,-1){5.00}}
\put(25,45){\line(1,1){5.00}}
\put(35,45){\line(-1,-1){5.00}}
\put(35,45){\line(-1,1){5.00}}
\put(35,45){\line(1,0){5.00}}
\put(25,45){\circle{1}}
\put(30,40){\circle{1}}
\put(35,45){\circle{1}}
\put(30,50){\circle{1}}
\put(32,46){\circle{1}}
\put(28,46){\circle{1}}
\put(30,50){\line(-1,-2){2.00}}
\put(30,50){\line(1,-2){2.00}}
\put(28,46){\line(1,0){4.00}}
\put(40,45){\line(1,-1){5.00}}
\put(40,45){\line(1,1){5.00}}
\put(50,45){\line(-1,-1){5.00}}
\put(50,45){\line(-1,1){5.00}}
\put(40,45){\circle{1}}
\put(45,40){\circle{1}}
\put(50,45){\circle{1}}
\put(45,50){\circle{1}}
\put(47,46){\circle{1}}
\put(43,46){\circle{1}}
\put(45,50){\line(-1,-2){2.00}}
\put(45,50){\line(1,-2){2.00}}
\put(43,46){\line(1,0){4.00}}
\end{picture}\hspace{-15mm}
\unitlength 1.8mm
\begin{picture}(40,30)(0,0)
\put(7,10.0){\emph{b})}
\put(18,16.0){$\cG_*$}
\bezier{200}(15,15)(25,10)(35,15)
\bezier{300}(15,15)(-1,25)(15,34)
\bezier{300}(35,15)(51,25)(35,34)
\bezier{200}(15,34)(25,38.5)(35,34)

\bezier{300}(18,25)(25,20)(32,25)
\bezier{300}(20,24)(25,28)(30,24)

\put(25,14){\circle*{0.5}}
\put(25,21){\circle*{0.5}}
\put(28.5,17.5){\circle*{0.5}}
\put(21.5,17.5){\circle*{0.5}}
\put(25,21){\line(1,-1){3.5}}
\put(25,21){\line(-1,-1){3.5}}
\put(25,14){\line(1,1){3.5}}
\put(25,14){\line(-1,1){3.5}}

\put(25,21){\line(1,-2){1.5}}
\put(25,21){\line(-1,-2){1.5}}
\put(23.5,18){\circle*{0.5}}
\put(26.5,18){\circle*{0.5}}
\put(23.5,18){\line(1,0){3}}

\bezier{200}(16,20)(9,25)(16,29)
\bezier{200}(34,20)(41,25)(34,29)
\bezier{300}(16,29)(25,34)(34,29)
\bezier{150}(16,20)(19,18)(21.5,17.5)
\bezier{150}(28.5,17.5)(31,18)(34,20)

\put(20.0,18.3){\small$v_1$}
\put(27.8,18.3){\small$v_3$}
\put(22.3,13.8){\small$v_2$}
\put(22.5,21){\small$v_4$}
\put(22.5,16.5){\small$v_5$}
\put(25.5,16.5){\small$v_6$}

\bezier{40}(27,12.6)(33,18)(27,22.7)
\bezier{100}(25,21)(26,22.4)(27,22.7)
\bezier{100}(25,14)(26,12.7)(27,12.6)
\end{picture}
\vspace{-1cm} \caption{ \footnotesize  \emph{a}) A $\Z^2$-periodic graph $\cG\ss\R^2$; $\mathfrak{e}_1,\mathfrak{e}_2$ are the standard basis of $\R^2$; the unit cell $\Omega=[0,1)^2$ is shaded; \emph{b}) the quotient graph $\cG_*=\cG/\Z^2$ is a graph on the 2-dimensional torus $\R^2/\Z^2$; the torus is obtained by identification of opposite sides of $\Omega$.}
\lb{FBri}
\end{figure}

The following example illustrates Theorem \ref{MTh} and Remark \ref{reMT}.\emph{ii}.

\begin{example}\lb{E1DL}
We consider the Schr\"odinger operator $H_{\mu Q}$ on the $\Z^2$-periodic graph $\cG$ shown in  Fig.\ref{FBri}\emph{a}. Its quotient graph $\cG_*=(\cV_*,\cE_*)=\cG/\Z^2$ is a graph on the 2-dimensional torus $\R^2/\Z^2$, see Fig.\ref{FBri}\emph{b}, where the vertex set $\cV_*=\{v_1,\ldots,v_6\}$. The set $\cC^+$ of all homotopically nontrivial cycle in $\cG_*$ consists of the cycles
$$
(v_1,v_2,v_3,v_1),\qqq (v_1,v_4,v_3,v_1),\qqq (v_2,v_3,v_4,v_2),\qqq (v_2,v_1,v_4,v_2),
$$
of length 3 (given by the sequences of their vertices). The cycles
$(v_1,v_2,v_3,v_4,v_1)$ and $(v_4,v_5,v_6,v_4)$ are trivial, since they can be contracted to a point.

A $\Z^2$-periodic potential $Q$ is uniquely determined by the vector $(Q(v_j))_{j\in\N_6}$ of its values at the vertices of $\cG_*$. The large coupling asymptotics \er{Aswb} and \er{mare} for widths of the spectral bands and for the measure of the spectrum of $H_{\mu Q}$ for some potentials $Q$ are presented in Table \ref{TaMe}. For each $a\in Q(\cV_*)$, $d_j^a:=d(v_j,\cV_*^a)$ is the distance from the vertex $v_j$, $j\in\N_6$, to the set $\cV_*^a$ defined by \er{cVa}, $w(a)$ is given in \er{ooa}; $\s_{\mu a}$ is a spectral band "near" $\mu a$.

\begin{table}[h]
\begin{tabular}{|c|c|c|c|c|c|c|c|c|c|c|}
  \hline
  $Q=(Q(v_j))_{j=1}^6$ & $a$ & $d_1^a$ & $d_2^a$ & $d_3^a$ & $d_4^a$ & $d_5^a$ & $d_6^a$ & $\o(a)$ & $|\s_{\mu a}|$ & $|\s(H_{\mu Q})|$ \\
  \hline
  $(1,1,1,0,0,0)$ & $1$ & 0 & 0 & 0 & 1 & 2 & 2 & 0 & $O(1)$ & $O(1)$ \\
  \cline{2-10}
                  & $0$ & 1 & 1 & 1 & 0 & 0 & 0 & 2 & $O(\mu^{-2})$ &  \\\hline
  $(1,1,0,0,0,0)$ & $1$ & 0 & 0 & 1 & 1 & 2 & 2 & 1 & $O(\mu^{-1})$ & $O(\mu^{-1})$ \\
  \cline{2-10}
                  & $0$ & 1 & 1 & 0 & 0 & 0 & 0 & 1 & $O(\mu^{-1})$ &  \\\hline
  $(1,2,3,4,0,0)$ & $1$ & 0 & 1 & 1 & 1 & 2 & 2 & 2 & $O(\mu^{-2})$ & $O(\mu^{-2})$ \\
  \cline{2-10}
                  & $2$ & 1 & 0 & 1 & 1 & 2 & 2 & 2 & $O(\mu^{-2})$ &  \\
  \cline{2-10}
                  & $3$ & 1 & 1 & 0 & 1 & 2 & 2 & 2 & $O(\mu^{-2})$ &  \\
  \cline{2-10}
                  & $4$ & 1 & 1 & 1 & 0 & 1 & 1 & 2 & $O(\mu^{-2})$ &  \\
  \cline{2-10}
                  & $0$ & 2 & 2 & 2 & 1 & 0 & 0 & 4 & $O(\mu^{-4})$ &  \\\hline
\end{tabular}
\vspace{2mm}
\caption{\footnotesize  Measure of the spectrum of $H_{\mu Q}$ as $\mu\to\iy$; $d_j^a:=d(v_j,\cV_*^a)$, $a\in Q(\cV_*)$.}
\lb{TaMe}
\end{table}
\end{example}

\begin{remark}
For any $\Z^2$-periodic potential $Q$ such that $Q(v_5)=Q(v_6)=0$ the Schr\"odinger operator $H_{\mu Q}$ on the periodic graph $\cG$ shown in  Fig.\ref{FBri}\emph{a} has the degenerate band $\{-1\}$ (the eigenvalue of infinite multiplicity) with an eigenfunction $\vp$ supported on $\{v_5,v_6\}$, $\vp(v_5)=-\vp(v_6)=1$.
\end{remark}

\medskip

The remainder of the paper is organized as follows. Section \ref{Sec2} contains definitions of the discrete Schr\"odinger operators on periodic graphs, quotient graphs and their homotopically nontrivial cycles, and a brief description of the Floquet reduction and spectra of the Schr\"odinger operators. Section 3 is devoted to the proof of Theorem \ref{MTh}.

\section{Preliminaries}\lb{Sec2}
\subsection{Periodic graphs and edge indices.}
Let $\cG=(\cV,\cE)$ be a simple (i.e., without loops and multiple edges) connected infinite graph, where $\cV$ is the set of its vertices and $\cE$ is the set of its unoriented edges. Considering each edge in $\cE$ to have two orientations, we introduce the set $\cA$ of all oriented edges. An edge starting at a vertex $u$ and ending at a vertex $v$ from $\cV$ will be denoted as the ordered pair $(u,v)\in\cA$. Vertices $u,v\in\cV$ will be called \emph{adjacent} and denoted by $u\sim v$, if $(u,v)\in\cA$.

We consider \emph{locally finite $\Z^d$-periodic graphs} $\cG$, i.e., graphs satisfying the following conditions:
\begin{itemize}
  \item[1)] $\cG$ is equipped with a free action of the group $\Z^d$;
  \item[2)] the quotient graph $\cG_*=\cG/\Z^d$ is finite.
\end{itemize}
The graph $\cG_*=(\cV_*,\cE_*)$ has the vertex set $\cV_*=\cV/\Z^d$, the set $\cE_*=\cE/\Z^d$ of unoriented edges and the set $\cA_*=\cA/\Z^d$ of doubled oriented edges.

Following \cite{F26}, we assume that the quotient graph $\cG_*$ has no loops and multiple edges. This assumption is not essential (for additional explanations see \cite[Section 2.1]{F26}), but simplifies the analysis of periodic graph operators. It is also convenient to embed a $\Z^d$-periodic graph $\cG$ into $\R^d$ in such a way that it is invariant with respect to shifts along any $\mn\in\Z^d$. In this case the  quotient graph $\cG_*=\cG/\Z^d$ is a graph on the $d$-dimensional torus $\R^d/\Z^d$, see Fig.\ref{FBri}.

We define the important notion of an {\it edge index} which was introduced in \cite{KS14}. This notion allows one to consider, instead of a periodic graph, the finite quotient graph with edges labeled by some integer vectors called indices.

For any vertex $v\in\cV$ of a $\Z^d$-periodic graph $\cG$, the following  unique representation holds true:
\[\lb{Dv}
v=v_0+[v], \qqq \textrm{where}\qqq v_0\in\cV\cap[0,1)^d,\qqq [v]\in\Z^d.
\]
For any oriented edge $\be$ of the periodic graph $\cG$, we define the \emph{edge index}
\[\lb{in}
\t(\be)=[v]-[u]\in\Z^d,\qqq \be=(u,v)\in\cA,
\]
where $[v]\in\Z^d$ is given by \er{Dv}. Due to the $\Z^d$-periodicity of the graph $\cG$, the edge indices $\t(\be)$ satisfy
\[\lb{Gpe}
\t(\be+\mn)=\t(\be),\qqq \forall\, (\be,\mn)\in\cA \ts\Z^d.
\]
This periodicity of the indices allows us to assign the \emph{index} $\t(\be_*)\in\Z^d$ to each oriented edge $\be_*\in\cA_*$ of the quotient graph $\cG_*$ by setting
\[\lb{dco}
\t(\be_*)=\t(\be),
\]
where $\be\in\cA$ is an oriented edge in the periodic graph $\cG$ from the equivalence class $\be_*\in\cA_*=\cA/\Z^d$. In other words, edge indices of the quotient graph $\cG_*$ are induced by edge indices of the periodic graph~$\cG$. Due to (\ref{Gpe}), the edge index $\t(\be_*)$ is uniquely determined by \er{dco} and does not depend on the choice of $\be\in\cA$. From the definition of the edge indices it follows that
\[\lb{inin}
\t(\ol\be\,)=-\t(\be), \qqq \forall\,\be\in\cA_*,
\]
where $\ol\be=(v,u)$ is the inverse edge of $\be=(u,v)\in\cA_*$.

\subsection{Cycles and their indices} \lb{ssci}
A \emph{path} $\gp$ in a graph $\cG=(\cV,\cA)$ is a sequence of oriented edges
$$
\gp=(\be_1,\be_2,\ldots,\be_l), \qqq \textrm{where} \qqq
\be_j=(v_{j-1},v_{j})\in\cA,\qq j=1,\ldots,l,
$$
for some vertices $v_0,v_1,\ldots,v_l\in\cV$. The number $l$ of edges in a path $\gp$ is called the \emph{length} of $\gp$ and is denoted by $|\gp|$, i.e., $|\gp|=l$. A path $\gp$ is called \emph{closed}, if $v_0=v_l$. A \emph{cycle} $\bc$ is a closed path of length $l\geq 3$ with distinct intermediate vertices.

\begin{remark}\lb{Rcdv} If a graph $\cG$ has no multiple edges, then a path $\gp$ can equivalently be described by the sequence $\gp=(v_0,v_1,\ldots,v_l)$ of vertices it passes by.
\end{remark}

For any path $\gp$ in the quotient graph $\cG_*=(\cV_*,\cA_*)$, we define the \emph{index} $\t(\gp)\in\Z^d$ as
\[\lb{cyin}
\t(\gp)=\t(\be_1)+\ldots+\t(\be_l),  \qqq \gp=(\be_1,\ldots,\be_l),
\]
where $\t(\be)\in\Z^d$ is the index of the edge $\be\in\cA_*$.

\begin{remark}\lb{Rein}
Edge indices depend on the choice of the embedding of the periodic graph $\cG$ into $\R^d$ (see Fig.1 in \cite{KS22}). Cycle indices \emph{do not} depend on this choice. Indeed, any cycle $\bc$ in the quotient graph $\cG_*$ is obtained by factorization of a path in the periodic graph $\cG$ connecting some $\Z^d$-equivalent vertices $v\in\cV$ and $v+\mn\in\cV$, $\mn\in\Z^d$, and the cycle index $\t(\bc)=\mn$, i.e., it does not depend on the choice of the embedding. In particular, $\t(\bc)=0$ if and only if the cycle $\bc$ in $\cG_*$ corresponds to a cycle in~$\cG$.
\end{remark}

Cycles in the quotient graph $\cG_*$ with non-zero indices are \emph{homotopically nontrivial} cycles on the $d$-dimensional torus $\R^d/\Z^d$, since they cannot be contracted to points, see Fig.\ref{FBri}.

\subsection{Spectra of the Schr\"odinger operators on periodic graphs.}
Let $\ell^2(\cV)$ be the Hilbert space of all square summable
functions  $f:\cV\to \C$ equipped with the norm
$$
\|f\|^2_{\ell^2(\cV)}=\sum_{v\in\cV}|f(v)|^2<\infty.
$$

We consider the discrete Schr\"odinger operator $H_Q=\D+Q$ acting on $\ell^2(\cV)$, where $\D$ is the \emph{adjacency} operator having the form
\[\lb{ALO}
(\D f)(v)=\sum_{u\sim v}f(u), \qqq f\in\ell^2(\cV), \qqq v\in\cV,
\]
and $Q$ is a real $\Z^d$-periodic potential, i.e., it satisfies
$$
(Qf)(v)=Q(v)f(v), \qqq Q(v+\mn)=Q(v), \qqq \forall\,(v,\mn)\in\cV\ts\Z^d.
$$
The sum in \er{ALO} is taken over all vertices $u\in\cV$ adjacent to the
vertex $v$.

The spectral analysis of periodic operators is based on the Floquet-Bloch theory (see, e.g., \cite{RS78} or, for the graph case, \cite{BK13}, Chapter 4). According to this theory, the spectrum of the Schr\"odinger operator $H_Q$ on a periodic graph $\cG$ is given by
$$
\s(H_Q)=\bigcup_{k\in\T^d}\s\big(H_Q(k)\big),\qqq \T^d=\R^d/(2\pi\Z^d).
$$
Here the \emph{Floquet $\n\ts\n$ matrix} $H_Q(k)$, $\n=\#\cV_*$, has the form
\[\lb{Flop}
H_Q(k)=\D(k)+Q,\qqq Q=\diag(Q(v))_{v\in\cV_*},
\]
where
\[\label{Hvt'}
\D(k)=(\D_{uv}(k))_{u,v\in\cV_*},\qqq \D_{uv}(k)=\left\{
\begin{array}{cl}
e^{i\lan\t(u,v),k\ran}, \;& \textrm{if}\;(u,v)\in\cA_*, \\[4pt]
0, \; & \textrm{otherwise},
\end{array}\right.
\]
$\t(u,v)\in\Z^d$ is the index of the edge $(u,v)\in\cA_*$, and $\lan\cdot,\cdot\ran$ denotes the standard inner product in $\R^d$. For more details and proof see \cite{KS14}.

Each Floquet matrix $H_Q(k)$, $k\in\T^{d}$, is self-adjoint and has $\n$ real eigenvalues
$$
\l_{1}(k)\leq\l_{2}(k)\leq\ldots\leq\l_{\nu}(k),
$$
labeled in non-decreasing order counting multiplicities.
Each \emph{band function} $\l_j(\cdot)$ is continuous and piecewise real analytic on the torus $\T^d$ and creates the \emph{spectral band} $\s_j(H_Q)$ given by
\[\lb{ban.1H}
\begin{array}{l}
\s_j(H_Q)=[\l_j^-,\l_j^+]=\l_j(\T^{d}),\qqq j\in\N_\n=\{1,\ldots,\n\},\\[8pt]
\displaystyle\textrm{where}\qqq \l_j^-=\min_{k\in\T^d}\l_j(k),\qqq \l_j^+=\max_{k\in\T^d}\l_j(k).
\end{array}
\]
Thus, the spectrum of the Schr\"odinger operator $H_Q$ on a periodic graph $\cG$ has the form
$$
\s(H_Q)=\bigcup_{k\in\T^d}\s\big(H_Q(k)\big)=
\bigcup_{j=1}^{\nu}\s_j(H_Q),
$$
i.e., it consists of $\n$ bands $\s_j(H_Q)$ defined by \er{ban.1H}.

\begin{remark}
Some of spectral bands $\s_j(H_Q)=[\l_j^-,\l_j^+]$ may be degenerate, i.e., $\l_j^-=\l_j^+$.
\end{remark}

\section{Proof of the main result}
\setcounter{equation}{0}
\subsection{Minimum spanning trees}\lb{sec3.1}
We recall the definition of spanning trees and their properties which will be used in the proof of our results. Let $G=(V,E)$ be a finite simple connected graph.

A \emph{spanning tree} $T=(V,E_{T})$ of the graph $G$ is a connected subgraph of $G$ which has no cycles and contains all vertices of $G$. We introduce the set $\cS=E\sm E_T$ of all edges from $E$ that do not belong to the spanning tree $T$. It is known (see, e.g., \cite[Lemma 5.1, Theorem~5.2]{B74}) that

$\bu$ the set $\cS$ consists of exactly $\beta$ edges, where $\beta=\#E-\#V+1$ is the Betti number of the graph $G$;

$\bu$ for each $\be\in\cS$ there exists a unique cycle $\bc_\be$ in $G$ whose edges are all in $T$ except $\be$;

$\bu$ the set of all such cycles $\bc_\be$, $\be\in\cS$, forms a basis of the cycle space of the graph~$G$.

\medskip

With edges of $G=(V,E)$ we assign a weight $\o:E\to\R$ and consider the weighted graph $G=(V,E,\o)$.

A \emph{minimum} spanning tree (MST) of the weighted graph $G=(V,E,\o)$ is a spanning tree of $G$ with the minimum possible total edge weight. The graph $G$ may have several MST of the same weight.

\emph{Cycle property of a MST}: for any cycle $\bc$ in the graph $G=(V,E,\o)$, the edge of $\bc$ with the maximum weight (or one of them if there are ties) is not included in the MST.

\subsection{A choice of the periodic graph embedding} In this section, for each $a\in Q(\cV_*)$, we define an appropriate embedding of the periodic graph $\cG$ into $\R^d$ which allows us to estimate the width of the spectral bands near $\mu a$.

\begin{proposition}\lb{Ppge}
Let $a\in Q(\cV_*)$, and let the weight $\o_a(\be)$ of each edge $\be\in\cE_*$ of the quotient graph $\cG_*=(\cV_*,\cE_*)$ be defined by \er{wae}. Then there is a 1-form $\t_a:\cA_*\to\Z^d$ such that

$\bu$ for each $k\in\T^d$, the Floquet operators $H_Q(k)$ given by \er{Flop}, \er{Hvt'}, with the index 1-form $\t:\cA_*\to\Z^d$, defined by \er{in}, \er{dco}, and the 1-form $\t_a$ are unitarily equivalent, and

$\bu$ for all $\be\in\cA_*$ with $\t_a(\be)\neq0$ we have
\[\lb{oaeg}
\o_a(\be)\geq \o(a),\qqq \o(a):=\min\limits_{\bc\in\cC^+}\max\limits_{\be\in\bc}\o_a(\be).
\]
Here $\cC^+$ is the set of all cycles with non-zero indices in $\cG_*$, and the maximum is taken over all edges $\be$ of the cycle $\bc$.
\end{proposition}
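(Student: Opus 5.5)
The idea is to change the index 1-form by a coboundary ("gauge transformation") chosen using a minimum spanning tree of the weighted quotient graph $\cG_*=(\cV_*,\cE_*,\o_a)$. First I will record the standard fact: if $\phi:\cV_*\to\Z^d$ is any function and we set $\t'(u,v)=\t(u,v)+\phi(v)-\phi(u)$, then $\t'$ is again a 1-form (i.e.\ $\t'(\ol\be)=-\t'(\be)$). Moreover, $\D'(k)=U(k)^*\D(k)U(k)$ with the diagonal unitary $U(k)=\diag(e^{i\lan\phi(v),k\ran})_{v\in\cV_*}$. This is an entrywise check: $e^{-i\lan\phi(u),k\ran}e^{i\lan\t(u,v),k\ran}e^{i\lan\phi(v),k\ran}$. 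Since $U(k)$ commutes with the diagonal matrix $Q$, the operators $H_Q(k)$ built from $\t$ and from $\t'$ are unitarily equivalent for every $k$. Such a change preserves all cycle indices $\t(\bc)$, in agreement with Remark \ref{Rein}.

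Next, take a minimum spanning tree $T=(\cV_*,E_T)$ of the weighted graph $(\cV_*,\cE_*,\o_a)$. Since $T$ is a tree, I can choose $\phi$ so that $\t_a:=\t+d\phi$ vanishes on every edge of $T$. To do this, fix a root $v_0$, set $\phi(v_0)=0$, and define $\phi$ along the unique tree path by $\phi(v)=\phi(u)-\t(u,v)$ for each tree edge $(u,v)$ oriented away from the root. This gives the first bullet of the proposition.

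For the second bullet, take an edge $\be=(u,v)$ with $\t_a(\be)\ne0$. Then $\be\notin E_T$, so $\be\in\cS$, and by the facts in Section \ref{sec3.1} there is a unique fundamental cycle $\bc_\be$ consisting of $\be$ together with the tree path from $v$ to $u$. Because $\t_a$ vanishes on tree edges, $\t(\bc_\be)=\t_a(\bc_\be)=\t_a(\be)\neq0$ (cycle indices are gauge invariant), so $\bc_\be\in\cC^+$. One degenerate case needs care: if the tree path has length $1$, the closed path has length 2 and is not a cycle. That would require a multiple edge, which $\cG_*$ does not have, so the length is at least $3$. By the cycle property of an MST, $\be$ is a maximum-weight edge of $\bc_\be$. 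More precisely, if some tree edge $\be'$ on $\bc_\be$ had $\o_a(\be')>\o_a(\be)$, exchanging $\be'$ for $\be$ would give a spanning tree of strictly smaller weight. Hence $\o_a(\be)=\max_{\be''\in\bc_\be}\o_a(\be'')\ge\min_{\bc\in\cC^+}\max_{\be''\in\bc}\o_a(\be'')=\o(a)$, which is \er{oaeg}.

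I expect the main subtlety to be ties in the MST cycle property. The paper's formulation says "the edge of maximum weight (or one of them) is not in the MST", but what I need is that the non-tree edge $\be$ itself attains the maximum on its fundamental cycle. I will prove this directly with the exchange argument above rather than quote the property. A secondary point is checking that the integer values of $\phi$ keep $\t_a$ $\Z^d$-valued; this holds by construction.
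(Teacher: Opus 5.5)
Your proof is correct and is essentially the paper's argument. The form $\t_a=\t+d\phi$ you build from a minimum spanning tree coincides with the paper's $\t_a$: it is zero on tree edges and equals the index of the fundamental cycle $\bc_\be$ on each non-tree edge $\be$. In both proofs the second bullet follows from $\bc_\be\in\cC^+$ together with the MST cycle property.

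The differences are only in how the two supporting facts are justified. You verify the unitary equivalence directly through the diagonal gauge $U(k)=\diag(e^{i\lan\phi(v),k\ran})_{v\in\cV_*}$, whereas the paper cites an external theorem that 1-forms with equal cycle indices give unitarily equivalent Floquet matrices. You also prove the tie-robust form of the cycle property by the exchange argument, namely that the non-tree edge itself attains the maximum weight on its fundamental cycle. This is exactly what the paper uses, though it states the property somewhat loosely.
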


\no \textbf{Proof.} Let $a\in Q(\cV_*)$, and let $T_a=(\cV_*,\cE_a)$ be a minimum spanning tree of the weighted quotient graph $\cG_*=(\cV_*,\cE_*,\o_a)$. Recall that for each $\be\in\cS_a:=\cE_*\sm\cE_a$ there exists a unique cycle $\bc_\be$ in $\cG_*$ whose edges are all in $T_a$ except $\be$.

We equip each edge of $\cS_a$ with some orientation and associate with $T_a$ a 1-form $\t_a:\cA_*\to\Z^d$ equal to zero on edges of $T_a$ and coinciding with the index of the cycle $\bc_\be$ on each edge $\be\in\cS_a$: 
\[\lb{cat}
\t_a(\be)=\left\{
\begin{array}{cl}
\t(\bc_\be), &  \textrm{ if } \, \be\in\cS_a\\[2pt]
-\t(\bc_{\ol\be}), &  \textrm{ if } \, \ol\be\in\cS_a\\[2pt]
  0, \qq & \textrm{ otherwise} \\
\end{array}\right..
\]
Due to the definition, this 1-form $\t_a:\cA_*\to\Z^d$ preserves indices of all basis cycles $\bc_\be$, $\be\in\cS_a$, of $\cG_*$. Consequently, it preserves indices of all cycles in $\cG_*$. Then \cite[Theorem 4.4.\emph{ii}]{KS20} for each $k\in\T^d$ the Floquet operators $H_Q(k)$ with the 1-forms $\t$ and $\t_a$ are unitarily equivalent.

Let $\be_o\in\cE_*$ and $\t_a(\be_o)\neq0$. We will show that $\o_a(\be_o)\geq\o(a)$, where $\o(a)$ is defined in \er{oaeg}. Since $\t_a(\be)=0$ for each edge $\be\in\cE_a$, $\be_o\notin\cE_a$. Then there is a (unique) cycle $\bc_{\be_o}$ consisting of the edge $\be_o$ and edges of $T_a$. Due to the \emph{cycle property of a MST} (see section \ref{sec3.1}), we have $\o_a(\be)\leq\o_a(\be_o)$ for all $\be\in\bc_{\be_o}$. Then, using that $\bc_{\be_o}\in\cC^+$, we obtain
$$
\o(a)=\min\limits_{\bc\in\cC^+}\max\limits_{\be\in\bc}\o_a(\be)\leq
\max\limits_{\be\in\bc_{\be_o}}\o_a(\be)=\o_a(\be_o).
$$
Thus, the 1-form $\t_a$ given by \er{cat} satisfies the condition \er{oaeg}. \qq $\Box$

\begin{remark}\lb{Remb}
For each $a\in Q(\cV_*)$, the 1-form $\t_a:\cA_*\to\Z^d$ given by \er{cat} determines the embedding of the periodic graph $\cG$ into $\R^d$, such that the index form $\t$, defined by \er{in}, \er{dco}, coincides with $\t_a$, see \cite[Corollary 2.3]{KS20}. The spectrum of the Schr\"odinger operator $H_Q$ on $\cG$ does not depend on the choice of the embedding.
\end{remark}

\subsection{Upper bounds on the spectral bands widths}

We obtain an upper bound on the width of the spectral bands of the Schr\"odinger operator $H_Q$ on a periodic graph $\cG$ with the quotient graph $\cG_*=(\cV_*,\cA_*)$.

\begin{proposition}\lb{Pubb}
For each $n\in\N_\n$, $\n=\#\cV_*$, there exists $k_o\in\T^d$ (depending on $n$) such that the width of the n-th spectral band $\s_n(H_Q)$ of the Schr\"odinger operator $H_Q$ is bounded by
\[\lb{upbu}
\big|\s_n(H_Q)\big|\leq2\pi d\sum_{\substack{\be=(u,v)\in\cA_*\\\t(\be)\neq0}}
\big\|\t(\be)\big\|\cdot\big|\vp_n(k_o,u)\big|\cdot\big|\vp_n(k_o,v)\big|,
\]
where $\t(\be)\in\Z^d$ is the index of the edge $\be\in\cA_*$, $\|\cdot\|$ is the standard norm in $\R^d$, and $\vp_n(k_o)=\vp_n(k_o,\cdot)\in\ell^2(\cV_*)$ is a normalized solution of $H_Q(k_o)\vp_n(k_o)=\l_n(k_o)\vp_n(k_o)$.
\end{proposition}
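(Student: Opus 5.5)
The plan is to bound the band width by integrating the derivative of $\l_n$ along straight paths in $\T^d$, using the Hellmann--Feynman formula for that derivative. Write $\l_n^\pm=\l_n(k^\pm)$ for the extremal points of the band. Since each band function is determined by $k$ modulo $2\pi\Z^d$, we may take representatives with $k^+-k^-\in[-\pi,\pi]^d$. We then move from $k^-$ to $k^+$ along a path consisting of at most $d$ coordinate segments, each of length at most $\pi$ (or at most $2\pi$; the constant $2\pi d$ leaves room for the cruder choice). Since $\l_n$ is Lipschitz, being a continuous, piecewise analytic eigenvalue of a matrix depending smoothly on $k$, we have
$$
|\s_n(H_Q)|=\l_n^+-\l_n^-\le\sum_{j=1}^d\int\Big|\frac{\pa\l_n}{\pa k_j}(k)\Big|\,dk_j,
$$
the integrals being taken over the respective segments.

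At points where $\l_n$ is analytic and $\vp_n(k)$ is a normalized eigenvector, the Hellmann--Feynman formula gives
$$
\frac{\pa\l_n}{\pa k_j}(k)=\Big\lan\frac{\pa H_Q(k)}{\pa k_j}\vp_n(k),\vp_n(k)\Big\ran .
$$
By \er{Flop} and \er{Hvt'} we have $\pa_{k_j}H_Q(k)_{uv}=i\,\t_j(u,v)e^{i\lan\t(u,v),k\ran}$, and $Q$ does not depend on $k$. Hence only edges with $\t(\be)\neq0$ contribute, and
$$
|\nabla\l_n(k)|\le\sum_{\be=(u,v)\in\cA_*,\ \t(\be)\neq0}\|\t(\be)\|\,|\vp_n(k,u)|\,|\vp_n(k,v)|=:F_n(k).
$$
On a segment of length $\le 2\pi$ the integral is at most $2\pi\sup F_n$. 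Summing over the $d$ segments gives $|\s_n|\le 2\pi d\,\sup_k F_n(k)$.

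It remains to replace the supremum by a value at a single point $k_o$. I will do this with a mean-value argument: the integral bound holds for every segment, so instead of the supremum I take a point on the path where the integrand is at least its average. Concretely, $F_n$ at some point of the path dominates the average of $|\nabla\l_n|$, and one then picks $k_o$ where $F_n$ is maximal along the path. Because the statement only asserts that some $k_o$ exists, taking $k_o$ to be a maximizer of $F_n$ suffices. Here $F_n(k)$ is computed with a normalized eigenvector, and I will choose the eigenvector that maximizes it within the eigenspace.

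The main obstacle is rigor at points where eigenvalues cross, where $\l_n$ is not differentiable and the eigenvector is not unique. I will handle this by noting that the set where $\l_n$ is non-analytic is a proper analytic subset, so the path can be perturbed slightly to meet it only at finitely many points. Alternatively, one can use the one-sided derivative formula at crossings: for analytic perturbation along a line, the Rellich analytic branches satisfy Hellmann--Feynman with some eigenvector in the eigenspace. In either case the Lipschitz bound by $\sup_k\max_{\vp}F_n$ goes through, and compactness of $\T^d$ and of the unit sphere of eigenvectors guarantees that this maximum is attained at some $k_o$ with some normalized eigenvector $\vp_n(k_o)$, which is exactly the form of \er{upbu}.
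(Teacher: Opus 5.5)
Your proof is correct, and its core is the same as the paper's. Both restrict $H_Q(k)$ to a line segment and use Rellich's analytic eigenvalue branches, so that away from finitely many crossing points the derivative of $\l_n$ equals $\lan A'(t)\psi(t),\psi(t)\ran$ for an analytic normalized eigenvector $\psi(t)$. Both then bound $\pa_{k_j}H_Q(k)$ entrywise by the edge indices.

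The difference is only in how the single point $k_o$ is produced.

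\emph{The paper's route.} It joins $k^-$ to $k^+$ by one straight segment. It applies the mean value theorem on the finitely many subintervals where $\l_n(k(t))$ follows a single branch, which gives a point $t_o$ with $\l_n^+-\l_n^-\le|E'_{l_o}(t_o)|$. So $k_o$ lies on $[k^-,k^+]$, the eigenvector is the Rellich eigenvector there, and no compactness argument is needed.

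\emph{Your route.} You integrate $|\pa_{k_j}\l_n|$ along coordinate segments and bound the integrand by $\max F_n$ over the set $\{(k,\vp):\|\vp\|=1,\ H_Q(k)\vp=\l_n(k)\vp\}$. This set is compact, being closed since $\l_n$ is continuous, so the maximum is attained and you take $k_o$ to be a maximizer. This yields the uniform form $|\s_n(H_Q)|\le 2\pi d\max_{k,\vp}F_n$. That uniform version is what the proof of Theorem~\ref{MTh} effectively needs anyway, because there $k_o$ depends on $\mu$. Your intermediate ``mean-value'' paragraph is superfluous; the maximizer argument alone finishes the proof.

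One remark should be corrected. At a point where $\l_n$ is analytic but degenerate, the Hellmann--Feynman identity need not hold for an arbitrary normalized eigenvector. For example, take $A(t)=\diag(-t,0,t)$ and $n=2$: then $\l_2\equiv 0$, yet at $t=0$ one has $\lan A'\vp,\vp\ran=|\vp(3)|^2-|\vp(1)|^2$. This is harmless for your argument. Your Rellich alternative only needs the identity for the branch eigenvector, and only at all but finitely many points of each segment, and that is exactly what your final bound by $\max_{\vp}F_n$ uses.
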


\no \textbf{Proof.} Let $\s_n(H_Q)=[\l_n^-,\l_n^+]$, where $\l_n^\pm=\l_n(k^\pm)$ for some $k^\pm\in\T^d$.
We define the family of operators
\[\lb{A(t)}
A(t)=H_Q(k(t)), \qqq k(t):=k^-+t(k^+-k^-),\qqq t\in[0,1],
\]
which is analytic. The eigenvalues of this one-parameter family can
be represented as a set of analytic functions $E_l(t)$, $l\in\N_\n$ (that can intersect). We have
\[\lb{deEt}
E_l'(t)=\lan\psi_l(t),A'(t)\psi_l(t)\ran,
\]
where $\psi_l(t)\in\ell^2(\cV_*)$ is a normalized solution of $A(t)\psi_l(t)=E_l(t)\psi_l(t)$. Since
$$
\l_n\big(k(t)\big)=E_l(t)\qqq \textrm{for some } l=l(t), \qqq t\in[0,1],
$$
then $\dfrac{d\l_n(k(t))}{dt}=E'_l(t)$ except at finitely many points.
Using the mean value theorem and the identities
$$
\l_n(k(0))=\l_n(k^-)=\l_n^-,\qqq \l_n(k(1))=\l_n(k^+)=\l_n^+,
$$
and \er{deEt}, we obtain that for some $t_o\in(0,1)$, 
\[\lb{leba}
\l_n^+-\l_n^-\leq\big|E'_{l_o}(t_o)\big|=
\big|\lan\psi_{l_o}(t_o),A'(t_o)\psi_{l_o}(t_o)\ran\big|,\qqq l_o:=l(t_o).
\]
Due to \er{A(t)}, we have
$$
A'(t)=\sum_{j=1}^d(k_j^+-k_j^-)\,\frac{\pa H_Q(k)}{\pa k_j}\Big|_{k=k(t)},\qqq
k=(k_1,\ldots,k_d).
$$
Then, using \er{Flop}, \er{Hvt'}, the edge index property \er{inin}, 
and the shorthand notation $\psi:=\psi_l(t)$, we obtain
\begin{multline*}
\lan\psi,A'(t)\psi\ran=\sum_{j=1}^d(k_j^+-k_j^-)\big\lan\psi,
\frac{\pa H_Q(k)}{\pa k_j}\Big|_{k=k(t)}\psi\big\ran\\=
i\sum_{j=1}^d(k_j^+-k_j^-)\sum_{\substack{\be=(u,v)\in\cA_*\\\t_j(\be)\neq0}}\t_j(\be) e^{-i\lan\t(\be),k(t)\ran}\psi(u)\ol{\psi(v)}\\
=\frac i2\sum_{j=1}^d(k_j^+-k_j^-)
\sum_{\substack{\be=(u,v)\in\cA_*\\\t_j(\be)\neq0}}\t_j(\be)\Big(e^{- i\lan\t(\be),k(t)\ran}\psi(u)\ol{\psi(v)}-
e^{i\lan\t(\be),k(t)\ran}\ol{\psi(u)}\psi(v)\Big)\\=
\sum_{j=1}^d(k_j^+-k_j^-)\sum_{\substack{\be=(u,v)\in\cA_*\\\t_j(\be)\neq0}}\t_j(\be)
\Im\big(e^{i\lan\t(\be),k(t)\ran}\ol{\psi(u)}\psi(v)\big),
\end{multline*}
where $\t(\be)=\big(\t_1(\be),\ldots,\t_d(\be)\big)\in\Z^d$.
Since $k^\pm\in\T^d$, we have $|k_j^+-k_j^-|\leq2\pi$ for each $j\in\N_d$. Then
$$
\big|\lan\psi,A'(t)\psi\ran\big|\leq
2\pi\sum_{j=1}^d\sum_{\substack{\be=(u,v)\in\cA_*\\\t_j(\be)\neq0}}
\hspace{-2mm}\big|\t_j(\be)\big|\cdot\big|\psi(u)\big|\cdot\big|\psi(v)\big|\leq
2\pi d\hspace{-3mm}\sum_{\substack{\be=(u,v)\in\cA_*\\\t(\be)\neq0}}
\hspace{-2mm}
\big\|\t(\be)\big\|\cdot\big|\psi(u)\big|\cdot\big|\psi(v)\big|.
$$
Substituting this into \er{leba}, we obtain
\[\lb{leb}
\l_n^+-\l_n^-\leq2\pi d\sum_{\substack{\be=(u,v)\in\cA_*\\\t(\be)\neq0}}
\big\|\t(\be)\big\|\cdot\big|\psi_{l_o}(t_o,u)\big|\cdot
\big|\psi_{l_o}(t_o,v)\big|.
\]
Since $\psi_{l_o}(t_o)$ is a normalized eigenvector of $H_Q(k_o)$ corresponding to $\l_n\big(k_o\big)$, where $k_o=k(t_o)\in\T^d$ for $k(\cdot)$ defined by \er{A(t)}, the inequality \er{upbu} follows from \er{leb}. \qq $\Box$

\medskip

\no \textbf{Proof of Theorem \ref{MTh}.} Let $\cV_*=\{1,\ldots,\n\}$ be the vertex set of the quotient graph $\cG_*=(\cV_*,\cA_*)$. Let $a\in Q(\cV_*)$ and $m_a=\#\cV_*^a$, where $\cV_*^a$ is defined by \er{cVa}. Then there are exactly $m_a$ bands
$$
\s_s(H_{\mu Q}), \qqq j_o\leq s\leq j_o+m_a-1 \qq \textrm{(for some $j_o=j_o(a)\in\N_\n$)},
$$
of $H_{\mu Q}$ that arise from $\mu a$. We will estimate the width of each of these bands.

Let $\ve=\frac1\mu$\,. For each fixed $k\in\T^d$, the matrix $A(\ve,k):=Q+\ve\D(k)$ has the same eigenvectors as the Floquet matrix $H_{\mu Q}(k)=\D(k)+\mu Q$ defined by \er{Flop}, \er{Hvt'}. In the limit $\ve\to0$ ($\mu\to\iy$) these eigenvectors can be obtained from perturbation theory considering $\ve\D(k)$ as a perturbation of $Q$. Let $\{\mathfrak{e}_s : s\in\N_\n\}$ be the standard basis of $\C^\n$.

First, we consider the case when $m_a=1$, i.e., $a=q_n:=Q(n)$ for a unique vertex $n\in\cV_*$. In this case, the normalized eigenvector of $Q$ corresponding to $a$ is $\mathfrak{e}_n$. Then the eigenvector of $A(\ve,k)$ corresponding to $\mathfrak{e}_n$, and to the eigenvalue $a$ of $Q$, is given by
\[\lb{vpk}
\vp(k)\equiv\big(\vp(k,1),\ldots,\vp(k,\n)\big)^\top=
P(\ve,k)\mathfrak{e}_n, \qqq \|\vp(k)\|=1+O(\ve),
\]
where $P(\ve,k)=\big(P_{ij}(\ve,k)\big)_{i,j=1}^\nu$ is the spectral projection given by
\[\lb{Pnth}
P(\ve,k)=\frac1{2\pi i}\oint\limits_{|\l-a|=\epsilon}\big(\l-A(\ve,k)\big)^{-1}d\l
\]
with a small $\epsilon>0$. We have
\[\lb{res}
\big(\l-A(\ve,k)\big)^{-1}=\sum_{r=0}^\iy\ve^r R^{(r)}(\l,k),
\]
where
$$
R^{(r)}(\l,k)=(\l-Q)^{-1}\big(\D(k)(\l-Q)^{-1}\big)^r=
\big(R^{(r)}_{ij}(\l,k)\big)_{i,j=1}^\nu
$$
is the $\n\ts\n$ matrix with entries
\[\lb{Rrij}
R^{(r)}_{ij}(\l,k)=\sum_{i_1,\ldots,i_{r-1}=1}^\n
\frac{\D_{ii_1}(k)\D_{i_1i_2}(k)\ldots\D_{i_{r-1}j}(k)}
{(\l-q_i)(\l-q_{i_1})\ldots(\l-q_{i_{r-1}})(\l-q_j)}\,.
\]
Substituting the second identity from \er{Hvt'} into \er{Rrij} and using the definition \er{cyin} of the path index $\t(\gp)$, we obtain
\[\lb{Rrij1}
R^{(r)}_{ij}(\l,k)=\sum_{\substack{\gp:i\to j\\|\gp|=r}}
\frac{e^{i\lan\t(\gp),k\ran}}
{\prod\limits_{l\in\gp}(\l-q_l)}\,,
\]
where $\gp:i\to j$ is a path of length $r$ from $i$ to $j$, and the product is taken over all vertices $l$ of the path $\gp$. Using \er{vpk} -- \er{res} and \er{Rrij1}, we have
\[\lb{vpnd}
\begin{aligned}
& \vp(k,n)=P_{nn}(\ve,k)=1+O(\ve),\\
& \vp(k,i)=P_{in}(\ve,k)=\ve^{d(i,n)}\sum_{\substack{\gp:i\to n\\|\gp|=d(i,n)}}\frac{e^{\lan\t(\gp),k\ran}}
{\prod\limits_{l\in\gp\sm\{n\}}(q_n-q_l)}+O\big(\ve^{d(i,n)+1}\big),\qqq i\neq n,
\end{aligned}
\]
where $d(i,n)$ is the distance between the vertices $i$ and $n$.

Next, we consider the case when $m_a=2$ (the case of arbitrary $m_a\geq3$ works out exactly the same way). Let $\cV_*^a=\{n,n'\}$. Normalized eigenvectors of $A(\ve,k)$ corresponding to the eigenvalue $a$ of $Q$, are given by
\[\lb{vpjk}
\vp^{(j)}(k)=P(\ve,k)(b_j\mathfrak{e}_n+b'_j\mathfrak{e}_{n'}), \qqq j=1,2,
\]
where $b_j$ and $b'_j$ are the solutions of the system
$$
\begin{array}{l}
b_j\lan\vp^{(j)}(k),\mathfrak{e}_n\ran+b'_j\lan\vp^{(j)}(k),\mathfrak{e}_{n'}\ran=1,
\\[3pt]
b_j\lan\vp^{(t)}(k),\mathfrak{e}_n\ran+b'_j\lan\vp^{(t)}(k),\mathfrak{e}_{n'}\ran=0,
\end{array}\qqq
t=\left\{\begin{array}{cc}
           1, & j=2 \\[2pt]
           2, & j=1
         \end{array}\right..
$$
Since $b_j$ and $b'_j$ are both (in general) $O(1)$, using \er{vpjk}, \er{Pnth}, \er{res} and \er{Rrij1}, we obtain
\[\lb{vpdc}
\vp^{(j)}(k,i)=b_jP_{in}(\ve,k)+b'_jP_{in'}(\ve,k)=
\left\{\begin{array}{ll}
O(1), & i=n,n',\\[4pt]
O\big(\ve^{d(i,\cV_*^a)}\big), \qq & i\neq n,n',
\end{array}\right.
\]
where $d(i,\cV_*^a)$ is the distance from the vertex $i$ to the vertex set $\cV_*^a=\{n,n'\}$ defined by \er{dvse}.

Due to \er{vpnd} and \er{vpdc}, we obtain that for each $k\in\T^d$ the eigenvectors $\vp_s(k)$ corresponding to the eigenvalues $\l_s(k)$ of the Floquet matrix $H_{\mu Q}(k)$ satisfy
\[\lb{evas}
\vp_s(k,v)=O\big(\ve^{d(v,\cV_*^a)}\big), \qqq v\in\cV_*,\qqq j_o\leq s\leq j_o+m_a-1.
\]

By Proposition \ref{Pubb}, for each $s\in\N_\n$, there exists $k_o\in\T^d$ (depending on $s$) such that
\[\lb{snal}
\big|\s_s(H_{\mu Q})\big|\leq2\pi d\sum_{\substack{\be=(u,v)\in\cA_*\\\t(\be)\neq0}}
\big\|\t(\be)\big\|\cdot
\big|\vp_s(k_o,u)\big|\cdot\big|\vp_s(k_o,v)\big|.
\]
Using \er{evas}, we have that for each $s=j_o,\ldots,j_o+m_a-1$ and for each $\be=(u,v)\in\cA_*$,
$$
\big|\vp_s(k_o,u)\big|\cdot\big|\vp_s(k_o,v)\big|=
O\big(\ve^{d(u,\cV_*^a)+d(v,\cV_*^a)}\big)=O\big(\ve^{\o_a(\be)}\big),
$$
where $\o_a(\be)$ is given by \er{wae}. We choose the embedding of $\cG$ into $\R^d$ such that the index form $\t=\t_a$, where $\t_a:\cA_*\to\Z^d$ is defined by Proposition \ref{Ppge} (see also Remark \ref{Remb}). Using that for each edge $\be\in\cA_*$ with $\t_a(\be)\neq0$ the condition \er{oaeg} holds, from \er{snal} we conclude that the width of each band $\s_s(H_{\mu Q})$ corresponding to $\mu a$ satisfies
$$
\big|\s_s(H_{\mu Q})\big|=O(\ve^{\o(a)}),
$$
where $\o(a)$ is defined in \er{oaeg}. The total measure of $\s(H_{\mu Q})$ has the order of magnitude of the widest bands. Therefore, we obtain
$$
|\s(H_{\mu Q})|=O(\ve^\o), \qqq \o=\min\limits_{a\in Q(\cV_*)}\o(a). 
$$
Since $\ve=\frac1\mu$\,, we get the asymptotics \er{mare} \qq $\Box$

\medskip

\textbf{Acknowledgments.}  This work is supported by the Russian Science Foundation (project No. 25-21-00157).

\medskip

\textbf{Data availability}  No datasets were generated or analysed during the current study.

\medskip

\textbf{Conflict of interest} None declared.


\end{document}